\documentclass[11pt]{amsart}
\usepackage{amsmath,amssymb,amsthm}

\theoremstyle{plain}
\newtheorem{theorem}{Theorem}[section]
\newtheorem{proposition}[theorem]{Proposition}
\newtheorem{lemma}[theorem]{Lemma}
\newtheorem{corollary}[theorem]{Corollary}
\newtheorem*{theoremA}{Theorem A}
\theoremstyle{definition}
\newtheorem{example}[theorem]{Example}
\newtheorem{remark}[theorem]{Remark}
\newtheorem{question}[theorem]{Question}

\newcommand{\Oz}{\operatorname{Oz}}
\newcommand{\Aut}{\operatorname{Aut}}
\newcommand{\supp}{\operatorname{supp}}
\newcommand{\kk}{\Bbbk}
\newcommand{\gr}{\operatorname{gr}}
\newcommand{\lf}{\operatorname{lf}}
\newcommand{\rk}{\operatorname{rk}}

\title{The ozone groups of the algebras $B_q(f)$}

\author{James Gómez}
\address{Universidad Pedagógica y Tecnológica de Colombia}
\email{james.gomez@uptc.edu.co}

\author{Helbert Venegas}
\address{Universidad Militar Nueva Granada, Bogot\'a, Colombia}
\email{Helbert.venegas@unimilitar.edu.co}

\subjclass[2020]{16S36, 16S38, 16W22, 16E65}
\keywords{Ozone group, Calabi--Yau algebra, Ore extension, normal element}

\begin{document}

\begin{abstract}
Let $q$ be a primitive $n$-th root of unity, $n>1$, and let $f$ be a nonzero polynomial
such that $n\nmid(j+1)$ for every $j\in\supp(f)$. Set
$e=\gcd(n,\{j+1:j\in\supp(f)\})$. We show that $\Oz(B_q(f))\cong\mu_e\times\mu_e$: the
defining relations are homogeneous for a $\mathbb{Z}/e\times\mathbb{Z}/e$ grading, the
center sits in degree zero, and the ozone group is the character group of that grading. The determination of the ozone group only requires the central elements
 $u^n$, $v^n$, and $\Omega$. The regular normal elements modulo the center form
the same group, generated by $u^{n/e}$ and $v^{n/e}$, so every normal element is central
exactly when $e=1$. For $f=t^2$ we recover a computation of Chan, Gaddis, Won and Zhang,
and for $e>1$ we obtain an infinite family of Calabi--Yau algebras with nontrivial ozone
group.
\end{abstract}

\maketitle

\section{Introduction}

The ozone group of an algebra $A$ is
\begin{align*}
  \Oz(A) &= \{\varphi\in\Aut(A) : \varphi(z)=z \text{ for all } z\in Z(A)\}.
\end{align*}
Chan, Gaddis, Won, and Zhang introduced it in \cite{CGWZ1,CGWZ2}. For noetherian
PI AS-regular algebras the ozone group is trivial
exactly when every normal element is central \cite[Theorem A]{CGWZ2}; and, if an algebra generated in degree one is a skew polynomial ring exactly when the
group is abelian of order $\rk_{Z}(A)$ \cite[Theorem F]{CGWZ2}. Liu, Wu and Zhu
\cite[Theorem 1.2]{LWZ} have shown that the group is always abelian in the PI AS-regular case.

Gaddis and Yee \cite[Definition 1.1]{GY} study the following family. For
$q\in\kk^\times$ and $f\in\kk[t]$,
\begin{align*}
  B_q(f) &= \kk\langle u,v,w \mid uv-qvu,\ wu-quw-f(v),\ wv-q^{-1}vw-f(u)\rangle,
\end{align*}
is the Ore extension $\kk_q[u,v][w;\sigma,\delta]$ with $\sigma(u)=qu$,
$\sigma(v)=q^{-1}v$, $\delta(u)=f(v)$ and $\delta(v)=f(u)$. These are Calabi--Yau
for every $q$ and $f$ \cite[Theorem A]{GY}, and $B_q(t^2)$ is an AS-regular algebra
whose point scheme is a nodal cubic.

We determine the ozone group of this family. For $f(t)=\sum_j c_jt^j$ write
$\supp(f)=\{j:c_j\neq0\}$. Assume throughout that $f\neq0$, that $q$ is a primitive
$n$-th root of unity with $n>1$, and that $n\nmid(j+1)$ for all $j\in\supp(f)$. Put
\begin{align*}
  e &= \gcd\bigl(n,\ \{j+1 : j\in\supp(f)\}\bigr).
\end{align*}

The integer $e$ comes from two separate requirements. Grade $B_q(f)$ by
$\mathbb{Z}/N\times\mathbb{Z}/N$ with $|u|=(1,0)$ and $|v|=(0,1)$; the relation
$uv=qvu$ is homogeneous for any $N$. Writing $|w|=(x,y)$, the relation
$wu=quw+f(v)$ forces $(x+1,y)=(0,j)$ and $wv=q^{-1}vw+f(u)$ forces $(x,y+1)=(j,0)$,
for every $j\in\supp(f)$. Together
\begin{align*}
  |w| &= (-1,-1), & N &\mid (j+1) \quad (j\in\supp(f)).
\end{align*}

Both relations determine $|w|$, and they agree only when $N\mid(j+1)$. The largest
$N$ they allow is $\gcd\{j+1:j\in\supp(f)\}$, which does not see $n$.

Now ask which characters $(\xi_1,\xi_2)$ of the grading can fix the center. Since
$u^n$ and $v^n$ are central of degrees $(n,0)$ and $(0,n)$,
\begin{align*}
  \xi_1^{\,n} &= 1, & \xi_2^{\,n} &= 1,
\end{align*}
so the surviving characters form $\mu_{\gcd(N,n)}\times\mu_{\gcd(N,n)}$. Taking $N$
as large as the relations permit leaves $\mu_e\times\mu_e$, the character group of
the $\mathbb{Z}/e\times\mathbb{Z}/e$ grading. Thus, we may work directly with that grading.

The elements \(u^n\) and \(v^n\) do not, however, exhaust the center, so an additional central element is needed.
Localizing at $u$ and $v$ makes the $\sigma$-derivation inner, and the element
$\gamma$ implementing it turns out to be homogeneous of degree $(-1,-1)$, the degree
of $w$. Then $w'=w-\gamma$ is homogeneous too, and a count of central monomials in
$\kk_q[u^{\pm1},v^{\pm1}][w';\sigma]$ puts the whole center in degree $(0,0)$; this
is Lemma \ref{lem:deg0}. Characters act trivially there, so
\begin{align*}
  u &\mapsto \xi_1u, & v &\mapsto \xi_2v, & w &\mapsto (\xi_1\xi_2)^{-1}w
\end{align*}
is an ozone automorphism for every $(\xi_1,\xi_2)\in\mu_e\times\mu_e$. The main result shows that every ozone automorphism arises in this way.

\begin{theoremA}[Theorem \ref{thm:main}]
$\Oz(B_q(f))\cong\mu_e\times\mu_e$. In particular the ozone group is trivial if and
only if $e=1$, and $|\Oz(B_q(f))|=e^2$.
\end{theoremA}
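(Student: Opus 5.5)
Two inclusions are needed: the characters of the $\mathbb{Z}/e\times\mathbb{Z}/e$ grading give ozone automorphisms, and every ozone automorphism is one of them.

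\emph{Characters are ozone automorphisms.} The introduction shows the defining relations are homogeneous for $|u|=(1,0)$, $|v|=(0,1)$, $|w|=(-1,-1)$ in $\mathbb{Z}/e\times\mathbb{Z}/e$. Hence each $(\xi_1,\xi_2)\in\mu_e\times\mu_e$ defines an automorphism $\varphi_\xi$ of $B_q(f)$. By Lemma~\ref{lem:deg0} the center lies in degree $(0,0)$, so $\varphi_\xi$ fixes $Z(B_q(f))$. Distinct characters act differently on $u$ or $v$, so this gives an injective homomorphism $\mu_e\times\mu_e\to\Oz(B_q(f))$.

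\emph{Surjectivity.} Let $\varphi\in\Oz(B_q(f))$; the goal is $\varphi=\varphi_\xi$ for some $\xi$. Since $u^n$ and $v^n$ are central and normal, and $B_q(f)$ is a domain, $\varphi$ fixes $u^n$ and $v^n$. I would first show that $\varphi(u)$ is a normal element whose $n$-th power is $u^n$. The cleanest route is \cite[Theorem A]{CGWZ2} together with \cite{LWZ}: $\Oz(A)$ is abelian and acts on $A$ as a grading group, via the decomposition $A=\bigoplus_\chi A_\chi$ of $A$ as a $Z$-module into eigenspaces. Each $A_\chi$ is then a rank-one $Z$-lattice spanned by normal elements.

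More concretely, I would pass to the localization $A'=\kk_q[u^{\pm1},v^{\pm1}][w';\sigma]$. Here $\varphi$ extends, since it fixes $u^n$ and $v^n$ and hence preserves the Ore set generated by them; note that $u^{-1}=u^{n-1}u^{-n}$. Its center is described by the count of central monomials in Lemma~\ref{lem:deg0}. One computes $Z(A')$: it is spanned by monomials $u^av^bw'^c$ with $n\mid a,b,c$, possibly twisted. So $A'$ is free over $Z(A')$ with basis the monomials $u^av^bw'^c$, $0\le a,b,c<n$. The ozone group of such a quantum torus / skew Laurent ring is $\mu_n^3$ acting diagonally. Thus $\varphi(u)=\xi_1u$, $\varphi(v)=\xi_2v$ and $\varphi(w')=\xi_3w'$, with $\xi_i\in\mu_n$; in particular $\varphi(\gamma)=(\xi_1\xi_2)^{-1}\gamma$ because $\gamma$ is homogeneous of degree $(-1,-1)$.

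\emph{Obtaining the constraints.} I would apply $\varphi$ to the relation $wu-quw=f(v)$. This gives $\xi_1\varphi(w)u-q\xi_1u\varphi(w)=\sum_j c_j\xi_2^jv^j$. We have $\varphi(w)=\xi_3w'+(\xi_1\xi_2)^{-1}\gamma$, and $\varphi(w)$ must lie in $B_q(f)$. Comparing with the original relation, the $\sigma$-derivation term forces $\xi_1\cdot\xi_2^{-j}\cdot(\xi_1\xi_2)^{-1}=1$, i.e.\ $\xi_2^{j+1}=1$, for all $j\in\supp(f)$. Symmetrically $\xi_1^{j+1}=1$. Together with $\xi_i^n=1$ this gives $\xi_1,\xi_2\in\mu_e$. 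It also forces $\xi_3=(\xi_1\xi_2)^{-1}$: otherwise $\varphi(w)$ would have a nonzero $\gamma$-component not in $B_q(f)$, using that $\gamma\notin B_q(f)$ since $n\nmid (j+1)$. Hence $\varphi=\varphi_\xi$.

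\emph{Main obstacle.} The hard step is justifying that $\varphi$ acts diagonally on $u$, $v$ and $w'$. This requires computing $Z(A')$ exactly, including a possible extra central generator such as $\Omega$ or a power of $w'$, and checking that every $Z(A')$-linear automorphism preserving the monomial multiplication is diagonal. I would first try an eigenspace argument. The central elements $u^n$, $v^n$ and $\Omega$ (which involves $w'^n$) pin down the rank of the center, so $A'$ has rank $n^3$ over it. The distinct $\sigma$-twisting characters of the monomials then force $\varphi(u)$ to lie in the same $Z$-eigenline as $u$, and likewise for $v$ and $w'$.
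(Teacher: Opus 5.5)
Your first half is exactly the paper's argument: the characters of the $\mathbb{Z}/e\times\mathbb{Z}/e$ grading fix the center by Lemma~\ref{lem:deg0}. The gap is in the converse, at the step you flag yourself. Nothing in the proposal proves that an ozone automorphism $\varphi$ satisfies $\varphi(u)\in\kk u$ and $\varphi(v)\in\kk v$, and the justification you sketch rests on false claims.

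\begin{itemize}
\item The center of $A'$ is wrong. The monomial count of Lemma~\ref{lem:deg0} says $u^av^bw'^c$ is central if and only if $a\equiv b\equiv c\pmod n$. So $uvw'=\Omega$ is already central, and it involves $w'$ only linearly, not through $w'^n$. In fact $Z(A')=\kk[u^{\pm n},v^{\pm n},\Omega]$, and $A'$ has rank $n^2$ over it, not $n^3$ (Proposition~\ref{prop:rank}).
\item Consequently the ozone group of $A'$ is not $\mu_n^3$. The map fixing $u,v$ and sending $w'\mapsto\xi_3w'$ moves $\Omega$ unless $\xi_3=1$.
\item The appeal to \cite[Theorem A]{CGWZ2} and \cite{LWZ} does not close the gap. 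Those results concern noetherian PI AS-regular algebras, which $B_q(f)$ is not for general $f$ (e.g.\ $f=t$ or $f=t+t^2$, where there is no connected grading). Even granting an abelian $\Oz$ with a $Z$-eigenspace decomposition, nothing places $\varphi(u)$ in $\kk u$.
\item The constraint step is circular. You assert $\varphi(\gamma)=(\xi_1\xi_2)^{-1}\gamma$ ``because $\gamma$ is homogeneous of degree $(-1,-1)$''. At that point you only know $\xi_i\in\mu_n$, while the $\mathbb{Z}/e\times\mathbb{Z}/e$ grading only controls characters in $\mu_e$. Directly, $\varphi(u^{-1}v^j)=\xi_1^{-1}\xi_2^{\,j}u^{-1}v^j$, which equals $(\xi_1\xi_2)^{-1}u^{-1}v^j$ exactly when $\xi_2^{\,j+1}=1$ — the very thing you are trying to prove.
\end{itemize}

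The paper supplies the missing step as Lemma~\ref{lem:diag}, using a filtration rather than the center of $A'$:
\begin{itemize}
\item Give $w$ weight $s=\max(d,1)$, so that $\gr B_q(f)$ is a quantum affine space.
\item From $\varphi(u)^n=u^n$ you get $\deg\varphi(u)=1$.
\item Reduce $\lf(\varphi(u))^n=U^n$ modulo the normal element $U$; the quotient is a domain, so the $V$ and $W$ coefficients vanish.
\item Characteristic zero then kills the constant term.
\end{itemize}
After that, $\varphi(\Omega)=\Omega$ with $\Omega=uvw'$ gives $\varphi(w')=(\xi_1\xi_2)^{-1}w'$ at once. Finally $\varphi(w)-\zeta w=\varphi(\gamma)-\zeta\gamma$ lies in $B_q(f)\cap\kk_q[u^{\pm1},v^{\pm1}]=\kk_q[u,v]$ (Lemma~\ref{lem:lau}). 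This forces every coefficient of $u^{-1}v^j$ and $v^{-1}u^j$ to vanish, giving $\xi_1^{\,j+1}=\xi_2^{\,j+1}=1$ without circularity.

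Your localization idea can be rescued, but only with the correct center and an extra theorem. $A'=\kk_q[u^{\pm1},v^{\pm1}][\Omega]$ is Azumaya over the UFD $\kk[u^{\pm n},v^{\pm n},\Omega]$, which has trivial Picard group. The extension of $\varphi$ fixes this center, because it equals $Z(B_q(f))$ localized at the central Ore set. By Skolem--Noether for Azumaya algebras, $\varphi$ is then conjugation by a unit, and the units of $A'$ are the $\lambda u^av^b$. That would give the diagonal action, but none of this is in the proposal.
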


Theorem 3.13 of \cite{GY} asserts that the ozone group is trivial under the
hypotheses above. The computation there yields $\xi^{\,j+1}=1$ for $j\in\supp(f)$
together with $\xi^n=1$, which leave $\xi\in\mu_e$. Theorem A recovers the stated
conclusion when $e=1$; for $e>1$ the group is nontrivial, the smallest instance
being $n=4$ and $f=t$, worked out in Example \ref{ex:small}. Corollary 3.14 of
\cite{GY} is affected in the same way and is replaced here by
Corollary \ref{cor:normal}.

For the reverse inclusion three central elements are enough. Besides $u^n$ and
$v^n$ there is
\begin{align*}
  \Omega &= uvw+\sum_{j\in\supp(f)}\beta_jc_j\,u^{j+1}-q\sum_{j\in\supp(f)}\alpha_jc_j\,v^{j+1}
\end{align*}
from \cite[Proposition 3.9]{GY}, with scalars $\alpha_j,\beta_j$ recalled in
Section \ref{sec:prelim}. No further information about the center is needed, since none is known \cite[Question 5.3]{GY}. Corollary
\ref{cor:detect} follows inmediately: any automorphism fixing $u^n$, $v^n$ and
$\Omega$ is already an ozone automorphism. Thus, this criterion can be checked directly.

The integer \(e\) also determines precisely when the ozone group is nontrivial. If $e>1$ and $m=n/e$,
then $u^m$ and $v^m$ are normal but not central, and their conjugation
automorphisms generate the ozone group. Note that $e=1$ is weaker than asking $n$
to be coprime to each $j+1$ separately, and that the invariant is a gcd over the
whole support rather than a condition term by term: for $n=6$, both $f=t$ and
$f=t^2$ give a nontrivial ozone group, while $f=t+t^2$ gives a trivial one. See
Example \ref{ex:six}. Every group $\mu_e\times\mu_e$ arises, since $n=2e$ and
$f=t^{e-1}$ satisfy the hypotheses.

 The hypothesis $n\nmid(j+1)$ forces $\gcd(n,j+1)<n$,
hence $e<n$, so $|\Oz(B_q(f))|=e^2$ is a proper divisor of
$\rk_{Z}(B_q(f))=n^2$ and no member of the family is a skew polynomial ring
(Proposition \ref{prop:rank}). The excluded case \(f=0\) corresponds formally to \(e=n\), and \(B_q(0)\) is a skew polynomial ring. Thus, under our hypotheses, \(e\) satisfies \(1\leq e<n\). The case \(e=1\) gives a trivial ozone group, while the limiting case \(e=n\) corresponds to the excluded case $f=0$.

The automorphisms that occur here are the maps $\phi_{a,\xi,h}$ of
\cite[Lemma 4.2]{GY}, recalled in Section \ref{sec:prelim}; Proposition
\ref{prop:phi} identifies those among them that fix the center. For $f=t^2$ one has
$e=\gcd(n,3)$, so
\begin{align*}
  \Oz(B_q(t^2))
  &\cong
  \begin{cases}
    1, & 3\nmid n,\\
    (\mathbb{Z}/3)^2, & 3\mid n,
  \end{cases}
\end{align*}
which is \cite[Proposition 1.8]{CGWZ2}; see Remark \ref{rem:cgwz}.

We use \cite[Lemmas 3.1, 3.2, 3.8, 4.1, 4.2 and Propositions 3.9, 3.10]{GY};
the arguments below are otherwise self-contained. Lemma~\ref{lem:diag} is related
to \cite[Lemma 3.12]{GY}, where the case $d=2$ is treated separately in order to
exclude the coefficient of $w$. Working with leading forms gives the same
conclusion with no case distinction and without the quadratic grading.

\medskip
\noindent\textit{Acknowledgement.} We are grateful to Jason Gaddis for helpful correspondence and to the members of the Algebra Seminar for valuable discussions.

\section{Preliminaries}\label{sec:prelim}

In this section, we collect the preliminary results and conventions that will be used throughout. The exposition follows the framework established by Gaddis and Yee in \cite{GY}, to whom we owe several of the key algebraic identities and lemmas for the family $B_q(f)$.

Throughout, $\kk$ is an algebraically closed field of characteristic zero, $q$ is a primitive $n$-th root of unity with $n>1$, and $f=\sum_jc_jt^j$ is nonzero of degree $d$ with $n\nmid(j+1)$ for all $j\in\supp(f)$. Then $\gcd(n,j+1)<n$ for every $j\in\supp(f)$, so
\begin{align*}
  e &\mid n, & e &< n, & m &:= n/e > 1,
\end{align*}
since $d\in\supp(f)$, then $e\mid (d+1)$. We write $\mu_e$ for the group of $e$-th roots of unity in $\kk^\times$; it is cyclic of order $e$, and we keep the multiplicative notation throughout rather than switching to $\mathbb{Z}/e$.

For $k\geq1$ write $[k]_t=1+t+\cdots+t^{k-1}$, so $[k]_t=0$ exactly when $t$ is a root of unity whose order divides $k$. We use \cite[Lemma 3.1]{GY} in the form
\begin{align*}
  wu^k &= q^ku^kw+\sum_{j=0}^{d}[k]_{q^{j+1}}c_jv^ju^{k-1}, \\
  wv^k &= q^{-k}v^kw+\sum_{j=0}^{d}[k]_{q^{-(j+1)}}c_ju^jv^{k-1}.
\end{align*}
Since $B_q(f)=R[w;\sigma,\delta]$ with $R=\kk_q[u,v]$ is an Ore extension of a domain, $\{u^av^bw^c\}_{a,b,c\geq0}$ is a $\kk$-basis and $B_q(f)$ is a domain. Under the standing hypothesis $u^n,v^n\in Z(B_q(f))$ \cite[Lemma 3.2]{GY}, the algebra is PI and module-finite over its affine center \cite[Proposition 3.10]{GY}, and
\begin{align*}
  \Omega &= uvw+\sum_{j\in\supp(f)}\beta_{j}c_ju^{j+1}
            -q\sum_{j\in\supp(f)}\alpha_jc_jv^{j+1},\\
  \alpha_j&=(q^{-j}-q)^{-1}, \quad \beta_j=(q^{-1}-q^j)^{-1},
\end{align*}
is central \cite[Proposition 3.9]{GY}. The scalars $\alpha_j$ and $\beta_j$ are defined, and nonzero, precisely because $n\nmid(j+1)$; both facts get used. For a regular normal element $a$ we set $\eta_a(b)=a^{-1}ba$, following \cite{CGWZ2}.

\subsection*{Localization.}
Since $u^n$ and $v^n$ are central, the set $X=\{(u^n)^a(v^n)^b : a,b\geq0\}$ is a central
Ore set, and in $\widehat{B}=B_q(f)X^{-1}$ both $u$ and $v$ become invertible:
\begin{align*}
  u^{-1} &= u^{n-1}(u^n)^{-1}, & v^{-1} &= v^{n-1}(v^n)^{-1}.
\end{align*}
So $\widehat{B}$ is the localization of \cite[Lemma 3.8]{GY}, and there is no Ore condition left to check. Two consequences are used repeatedly below. An automorphism of $B_q(f)$ fixing $u^n$ and $v^n$ sends $X$ to itself, hence extends to $\widehat{B}$.
Moreover, an element of $B_q(f)$ commuting with $u$ commutes with $u^{-1}$ as well, and
likewise for $v$, so
\begin{align*}
  Z(B_q(f)) &= Z(\widehat{B})\cap B_q(f).
\end{align*}

Write $R_X=\kk_q[u^{\pm1},v^{\pm1}]$. By \cite[Lemma 3.8]{GY} the derivation $\delta$
becomes inner on $R_X$, implemented by
\begin{align*}
  \gamma &= \sum_{j\in\supp(f)}\alpha_jc_j\,u^{-1}v^j
          - \sum_{j\in\supp(f)}\beta_jc_j\,v^{-1}u^j,
  & \delta(r) &= \gamma r-\sigma(r)\gamma \qquad (r\in R_X).
\end{align*}
Put $w'=w-\gamma$. Then, for $r\in R_X$,
\begin{align*}
  w'r &= \sigma(r)w+\delta(r)-\gamma r = \sigma(r)(w-\gamma) = \sigma(r)w',
\end{align*}
so $\widehat{B}=R_X[w';\sigma]$, and in particular
\begin{align*}
  w'u &= quw', & w'v &= q^{-1}vw', & w'(uv) &= (uv)w'.
\end{align*}

From $uv=qvu$ we get $uvu^{-1}=qv$, while $uvv^{-1}=u$, so
\begin{align*}
  uv\gamma &= q\sum_{j\in\supp(f)}\alpha_jc_j\,v^{j+1}
            - \sum_{j\in\supp(f)}\beta_jc_j\,u^{j+1},
\end{align*}
and therefore
\begin{align*}
  uvw' &= uvw-uv\gamma = \Omega.
\end{align*}
This is the form of $\Omega$ we work with. Centrality is immediate from it: $w'$ commutes
with $uv$, and
\begin{align*}
  \Omega u &= q\,(uvu)\,w' = u\,(uvw') = u\Omega,
  & \Omega v &= q^{-1}uv^2w' = (vuv)w' = v\Omega,
\end{align*}
so $\Omega$ commutes with $R_X$ and with $w'$, hence with $w=w'+\gamma$.

\subsection*{Automorphisms.} Following \cite[(4.1)]{GY}, set
\begin{align*}
  T_q &= \{h\in\kk_q[u,v] : hu=quh,\ hv=q^{-1}vh\}
       = \operatorname{span}_\kk\{u^{bn-1}v^{cn-1} : b,c\geq1\},
\end{align*}
the second description being \cite[Lemma 4.1]{GY}. The computation takes place inside
$\kk_q[u,v]$, so it does not depend on $f$. If $a,\xi\in\kk^\times$ satisfy
\begin{align*}
  a^{d-i} &= 1, & \xi^{i+1} &= 1 && (i\in\supp(f)),
\end{align*}
and $h\in T_q$, then
\begin{align*}
  \phi_{a,\xi,h}(u) &= au, & \phi_{a,\xi,h}(v) &= \xi av,
  & \phi_{a,\xi,h}(w) &= \xi^{-1}a^{d-1}w+h
\end{align*}
determines an automorphism of $B_q(f)$ \cite[Lemma 4.2]{GY}.

We write $R_k=\operatorname{span}_\kk\{u^av^b : a+b=k\}$ and $R_{\leq k}=\bigoplus_{i\leq k}R_i$,
with $R_{\leq k}=0$ for $k<0$; since $uv=qvu$ is homogeneous, $R=\bigoplus_{k\geq0}R_k$ is a
grading. Put
\begin{align*}
  s &= \max(d,1), & F_N &= \sum_{c\geq0}R_{\leq N-sc}\,w^c \quad (N\geq0).
\end{align*}

\noindent
So $F_N$ is spanned by the PBW monomials $u^av^bw^c$ with $a+b+sc\leq N$.

\begin{lemma}\label{lem:filt}
$\{F_N\}$ is an exhaustive algebra filtration of $B_q(f)$, and
\begin{align*}
  \gr B_q(f) &\cong \kk\langle U,V,W\rangle/(UV-qVU,\ WU-qUW,\ WV-q^{-1}VW),
\end{align*}
a skew polynomial ring; in particular $\gr B_q(f)$ is a domain.
\end{lemma}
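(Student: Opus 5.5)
The plan is to put a weight on PBW monomials, show that the span $F_N$ is closed under multiplication, and then read off $\gr B_q(f)$ from the leading parts of the three defining relations. Give $u^av^bw^c$ the weight $a+b+sc$. Since $\{u^av^bw^c\}$ is a $\kk$-basis, $F_N$ is the span of basis monomials of weight $\le N$. The inclusions $F_N\subseteq F_{N+1}$ are clear, and $\bigcup_N F_N=B_q(f)$ because every basis monomial has finite weight. The real content is $F_NF_M\subseteq F_{N+M}$. For this it suffices to show that for every basis monomial $x$ of weight $N$, the products $ux$, $vx$, $wx$ lie in $F_{N+1}$, $F_{N+1}$, $F_{N+s}$ respectively. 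Once that holds, induction on the length of a word in $u,v,w$ shows that any word of weight $M$ acting on $F_N$ lands in $F_{N+M}$. Since the basis monomials of weight $M$ are themselves such words, the inclusion follows.

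Left multiplication by $u$ or $v$ is easy. The product $u\cdot u^av^bw^c$ is already a basis monomial, and $v\cdot u^av^bw^c=q^{-a}u^av^{b+1}w^c$; both have weight $N+1$.

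The key step is left multiplication by $w$. For $r\in R_k$ we have $wr=\sigma(r)w+\delta(r)$. Since $\delta(u)=f(v)\in R_{\le d}$ and $\delta(v)=f(u)\in R_{\le d}$, the $\sigma$-twisted Leibniz rule gives $\delta(R_k)\subseteq R_{\le k-1+d}$. As $s=\max(d,1)$, this is contained in $R_{\le k+s-1}$. Hence
\begin{align*}
  w\,u^av^bw^c &= \sigma(u^av^b)\,w^{c+1}+\delta(u^av^b)\,w^c ,
\end{align*}
where the first term has weight exactly $N+s$ and the second lies in $F_{N+s-1}$. So $wF_N\subseteq F_{N+s}$. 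I also need $\sigma$ to preserve each $R_k$, which holds because it acts diagonally on monomials. This step is the one place where the choice $s=\max(d,1)$ matters: the weight of $w$ must be at least $d$ to dominate $\delta$, and strictly greater than $d-1$ so that $\delta$ drops into lower filtration.

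For the associated graded ring, the computation above shows that the symbol of a basis monomial multiplies like a PBW monomial, with $\sigma$ as the only twist. Concretely, the images $U,V,W$ of $u,v,w$ in degrees $1,1,s$ generate $\gr B_q(f)$ and satisfy
\begin{align*}
  UV&=qVU, & WU&=qUW, & WV&=q^{-1}VW.
\end{align*}
The last two hold because $wu-quw=f(v)\in F_d\subseteq F_{s}$, which lies in strictly lower degree than $s+1$; the same argument applies to $wv$. This gives a surjection from the skew polynomial ring $S=\kk\langle U,V,W\rangle/(\ldots)$ onto $\gr B_q(f)$. The ring $S$ has basis $U^aV^bW^c$, and these map to the symbols of the basis $u^av^bw^c$ of $B_q(f)$, which form a basis of $\gr B_q(f)$. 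So the surjection is an isomorphism. Finally, $S$ is an iterated Ore extension $\kk_q[U,V][W;\sigma]$ of a domain, hence a domain.
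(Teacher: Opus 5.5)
Your proof is correct. The overall shape matches the paper: a weighted PBW filtration, then a surjection from the skew polynomial ring onto $\gr B_q(f)$ that is bijective. Where you differ is the key inclusion $F_MF_N\subseteq F_{M+N}$, which you prove by a different mechanism.

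The paper lets $\widetilde F_N$ be the image of the span of free-algebra words of weight at most $N$, so multiplicativity is automatic. It then proves $\widetilde F_N=F_N$ by reading the relations as rewriting rules $vu\mapsto q^{-1}uv$, $wu\mapsto quw+f(v)$, $wv\mapsto q^{-1}vw+f(u)$. These terminate under the lexicographic order on (number of $w$'s, number of inversions), and they never raise weight because $j\le d\le s<s+1$.

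You instead work inside $R[w;\sigma,\delta]$. The twisted Leibniz rule $\delta(rr')=\sigma(r)\delta(r')+\delta(r)r'$ gives $\delta(R_k)\subseteq R_{\le k+d-1}$. Hence left multiplication by $u$, $v$, $w$ sends $F_N$ into $F_{N+1}$, $F_{N+1}$, $F_{N+s}$, and induction on word length finishes.

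Your version avoids a termination argument and isolates exactly the inequality $s\ge d$ that is used. The paper's version gives as a byproduct the intrinsic description of $F_N$ as the span of all words of weight at most $N$, independent of the PBW normal form.

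On the graded side, the paper compares dimensions of the graded pieces, which are finite only because $s\ge 1$. You send a basis to a basis, which needs no finite-dimensionality. Indeed nothing in your argument uses the $\max(\cdot,1)$ in $s$. So your closing remark overstates where that choice matters: ``at least $d$'' and ``strictly greater than $d-1$'' are the same condition. The $\max(\cdot,1)$ is needed for the paper's dimension count and for Lemma~\ref{lem:diag}, where $F_1$ must be spanned by $1,u,v$ and possibly $w$.
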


\begin{proof}

Give the free algebra $\kk\langle u,v,w\rangle$ the weights $\operatorname{wt}(u)=\operatorname{wt}(v)=1$ and $\operatorname{wt}(w)=s$, and let $\widetilde F_N$ be the image in $B_q(f)$ of the span of the words of weight at most $N$. Concatenating words adds weights, so
$\widetilde F_M\widetilde F_N\subseteq\widetilde F_{M+N}$, and $F_N\subseteq\widetilde F_N$ because a PBW monomial is a word. For the reverse inclusion, read the defining relations as rewriting rules
\begin{align*}
  vu &\mapsto q^{-1}uv, & wu &\mapsto quw+f(v), & wv &\mapsto q^{-1}vw+f(u).
\end{align*}
Order the pairs (number of occurrences of $w$, number of inversions) lexicographically. Each rule either removes an occurrence of $w$, in the summands coming from $f$, or keeps their number and removes one inversion; so repeated application to a word terminates, and it terminates in a combination of PBW monomials. No rule raises the weight:
\begin{align*}
  \operatorname{wt}(vu) &= \operatorname{wt}(uv) = 2, \\
  \operatorname{wt}(wu) &= \operatorname{wt}(uw) = s+1, \\
  \operatorname{wt}(v^j) &= j \leq d \leq s < s+1 \qquad (j\in\supp(f)).
\end{align*}

and likewise for the third rule. So a word of weight at most $N$ is a combination of PBW monomials of weight at most $N$, that is $\widetilde F_N=F_N$, and $F_MF_N\subseteq F_{M+N}$.
 The PBW basis shows the filtration is exhaustive, with
\begin{align*}
  \dim_\kk F_N/F_{N-1} &= \#\{(a,b,c) : a+b+sc=N\}.
\end{align*}
Let $S$ be the skew polynomial ring above, weighted by $|U|=|V|=1$ and $|W|=s$. In
$\gr B_q(f)$ the relation $uv=qvu$ gives $UV=qVU$, and $wu=quw+f(v)$ gives $WU=qUW$, since $f(v)\in F_d\subseteq F_s$ while $wu\in F_{s+1}$; similarly $WV=q^{-1}VW$. So $S$ surjects onto $\gr B_q(f)$, and the component of weighted degree $N$ in $S$ has basis $\{U^aV^bW^c : a+b+sc=N\}$, of the same dimension. The surjection is an isomorphism.
\end{proof}

For $f=t^d$ with $d\geq2$ the algebra carries an $\mathbb{N}$-grading, $|u|=|v|=1$ and $|w|=d-1$, and the associated graded algebra for that filtration is $B_q(t^d)$ itself: the term $f$ survives. This is no good for Lemma \ref{lem:diag}, whose argument reduces modulo the ideal generated by the image of $u$, and $u$ is not normal in $B_q(t^d)$:
here $wu-quw=v^d$, while $uB_q(f)$ is spanned by the PBW monomials $u^av^bw^c$ with $a\geq1$, so $wu$ lies in $B_q(f)u$ but not in $uB_q(f)$. Giving $w$ the weight $s=d$ pushes $f$ down one filtration degree instead, and $\gr B_q(f)$ is a quantum affine space, where $U$ is normal. Nowhere else does the choice matter.

Finally, for $0\neq a\in B_q(f)$ write $\deg(a)=\min\{N : a\in F_N\}$ and let $\lf(a)$ be the image of $a$ in $F_{\deg(a)}/F_{\deg(a)-1}$. Since $\gr B_q(f)$ is a domain,
\begin{align*}
  \lf(ab) &= \lf(a)\lf(b), & \deg(ab) &= \deg(a)+\deg(b).
\end{align*}

\section{ A Grading on  $B_q(f)$}

\begin{lemma}\label{lem:grad}
$B_q(f)$ is $\mathbb{Z}/e\times\mathbb{Z}/e$-graded by $|u|=(1,0)$, $|v|=(0,1)$,
$|w|=(-1,-1)$.
\end{lemma}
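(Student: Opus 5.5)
The plan is the standard one for gradings of presented algebras. Grade the free algebra $\kk\langle u,v,w\rangle$ by $\mathbb{Z}/e\times\mathbb{Z}/e$, assigning the stated degrees to the generators and extending additively to words. Then show that each of the three defining relations is a homogeneous element. Once that holds, the two-sided ideal they generate is a graded ideal, and the quotient $B_q(f)$ inherits the grading. By the PBW basis $\{u^av^bw^c\}$, the homogeneous components are spanned by the monomials $u^av^bw^c$ with $(a-c,\,b-c)\equiv(r,s)\pmod e$.

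I will check the three relations in turn.

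\emph{First relation.} In $uv-qvu$ both words $uv$ and $vu$ have degree $(1,1)$, so this relation is homogeneous for any modulus.

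\emph{Second relation.} In $wu-quw-f(v)$, the words $wu$ and $uw$ have degree $(-1,-1)+(1,0)=(0,-1)$. Each monomial $c_jv^j$ of $f(v)$, with $j\in\supp(f)$, has degree $(0,j)$. So homogeneity requires $(0,j)\equiv(0,-1)$, i.e.\ $e\mid(j+1)$. This holds because $e$ divides every $j+1$ with $j\in\supp(f)$, by its definition as a gcd.

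\emph{Third relation.} In $wv-q^{-1}vw-f(u)$, the words $wv$ and $vw$ have degree $(-1,0)$, and $c_ju^j$ has degree $(j,0)\equiv(-1,0)$ modulo $e$, again because $e\mid(j+1)$.

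Hence all three relations are homogeneous of degrees $(1,1)$, $(0,-1)$ and $(-1,0)$ respectively, and the grading descends to $B_q(f)$.

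There is no real obstacle here; the only point needing care is that the constant term, if $0\in\supp(f)$, is covered by the same argument, since then $e\mid 1$ and the grading is trivial. The divisibility $e\mid n$ is not needed for this lemma at all; it enters only later, when $u^n$ and $v^n$ are placed in degree $(0,0)$.
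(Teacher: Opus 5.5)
Your proposal is correct and matches the paper's proof: both check that the three defining relations are homogeneous, and the only nontrivial condition is $e\mid(j+1)$ for $j\in\supp(f)$, which holds by the definition of $e$. Your remarks on the constant term, which forces $e=1$, and on $e\mid n$ not being needed here are accurate.
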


\begin{proof}
The relation $uv-qvu$ is homogeneous of degree $(1,1)$. In $wu-quw-f(v)$ the first two terms have degree $(0,-1)$, while the summand $v^j$ of $f(v)$ has degree $(0,j)$; so homogeneity amounts to $j\equiv-1\pmod e$ for every $j\in\supp(f)$, which is what $e$ was defined to give. The third relation is symmetric.
\end{proof}

We write $\mu_e\times\mu_e$ for the group of grading automorphisms, so that
$(\xi_1,\xi_2)$ acts by
\begin{align*}
  u&\mapsto \xi_1 u, & v&\mapsto \xi_2v, & w&\mapsto (\xi_1\xi_2)^{-1}w.
\end{align*}

\begin{proposition}\label{prop:norm}
Suppose $e>1$ and let $m=n/e$. Then $u^m$ and $v^m$ are normal but not central:
\begin{align*}
  wu^m &= q^mu^mw, & u^mv &= q^mvu^m, & wv^m &= q^{-m}v^mw, & v^mu &= q^{-m}uv^m.
\end{align*}
\end{proposition}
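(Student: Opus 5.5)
The plan is to verify the four displayed commutation relations directly. Normality and non-centrality then follow from the PBW basis. The relations $u^mv=q^mvu^m$ and $v^mu=q^{-m}uv^m$ hold already in $\kk_q[u,v]$: iterating $uv=qvu$ gives $u^av^b=q^{ab}v^bu^a$, and we take $(a,b)=(m,1)$ and $(1,m)$.

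The substantive relations involve $w$. For these I will use \cite[Lemma 3.1]{GY} in the form recalled in Section \ref{sec:prelim}, with $k=m$:
\begin{align*}
  wu^m &= q^mu^mw+\sum_{j=0}^{d}[m]_{q^{j+1}}c_jv^ju^{m-1}.
\end{align*}
So it suffices to show that $[m]_{q^{j+1}}=0$ for every $j\in\supp(f)$. By the definition of $e$, each such $j$ has $e\mid(j+1)$. Since $q$ is a primitive $n$-th root of unity, $q^{j+1}$ is a root of unity whose order is $n/\gcd(n,j+1)$. As $e\mid\gcd(n,j+1)$, this order divides $n/e=m$. By the stated criterion ($[k]_t=0$ exactly when $t$ is a root of unity of order dividing $k$), we get $[m]_{q^{j+1}}=0$. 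The one delicate point is the case $q^{j+1}=1$, where $[m]_1=m\neq0$ in characteristic zero. This case is excluded because $n\nmid(j+1)$, so the order of $q^{j+1}$ exceeds $1$ and the criterion applies. The same argument with $q^{-(j+1)}$ in the second formula of \cite[Lemma 3.1]{GY} gives $wv^m=q^{-m}v^mw$.

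From these relations, $u^m$ skew-commutes with each generator $u,v,w$ up to nonzero scalars. Hence $u^mB_q(f)=B_q(f)u^m$, and because $B_q(f)$ is a domain, $u^m$ is a regular normal element. The same holds for $v^m$. For non-centrality, $u^mv-vu^m=(q^m-1)vu^m$ is nonzero: $1<m<n$ gives $q^m\neq1$, and $vu^m\neq0$ in the domain. The same computation shows $v^m$ is not central. Here $m<n$ uses $e>1$, and $m>1$ is not needed.

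I expect no step to be a real obstacle. The only care needed is the root-of-unity bookkeeping: confirming $e\mid(j+1)$ for all $j\in\supp(f)$, and using $n\nmid(j+1)$ to rule out the degenerate value $[m]_1=m$.
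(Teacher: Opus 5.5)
Your proposal is correct and takes essentially the same route as the paper. Both apply \cite[Lemma 3.1]{GY} with $k=m$ and kill each $[m]_{q^{\pm(j+1)}}$ the same way: the order $n/\gcd(n,j+1)$ of $q^{j+1}$ divides $m$, and $n\nmid(j+1)$ rules out $q^{j+1}=1$. Both then get non-centrality from $q^m\neq1$, since $0<m<n$; your explicit treatment of the degenerate value $[m]_1=m$ is a worthwhile precision, because the criterion for $[k]_t=0$ as stated in Section~\ref{sec:prelim} literally omits the condition $t\neq1$.
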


\begin{proof}
For every \(j\in\operatorname{supp}(f)\),   \(e\mid(j+1)\) and \(e\mid n\). In particular, \(e\mid\gcd(n,j+1)\). Set \(g_j=\gcd(n,j+1)\). Since \(e\mid g_j\), we may write \(g_j=er_j\) for some positive integer \(r_j\). Since \(g_j\mid n=em\), it follows that \(r_j\mid m\), and therefore
\[
\frac{n}{\gcd(n,j+1)}
=
\frac{em}{er_j}
=
\frac{m}{r_j}
\mid m.
\]

Since \(q\) is a primitive \(n\)-th root of unity,
\[
\operatorname{ord}(q^{j+1})
=
\frac{n}{\gcd(n,j+1)}.
\]
Thus, \(\operatorname{ord}(q^{j+1})\mid m\), and hence \((q^{j+1})^m=1\). Moreover, the assumption \(n\nmid(j+1)\) implies \(q^{j+1}\neq1\). Consequently,
\[
[m]_{q^{j+1}}
=
\frac{1-(q^{j+1})^m}{1-q^{j+1}}
=
0.
\]
Likewise, \((q^{-(j+1)})^m=1\) and \(q^{-(j+1)}\neq1\), so
\[
[m]_{q^{-(j+1)}}
=
\frac{1-(q^{-(j+1)})^m}{1-q^{-(j+1)}}
=
0.
\]

Applying \cite[Lemma 3.1]{GY} with \(k=m\), we obtain
\[
wu^m=q^mu^mw,
\qquad
wv^m=q^{-m}v^mw.
\]
On the other hand, the relation \(uv=qvu\) yields, by induction, \(u^mv=q^mvu^m\), while \(vu=q^{-1}uv\) gives \(v^mu=q^{-m}uv^m\). Together with the  relations \(u^mu=uu^m\) and \(v^mv=vv^m\), these identities show that \(u^m\) and \(v^m\) are normal elements of \(B_q(f)\).

It remains to show that they are not central. Since \(e>1\) and \(m=\frac{n}{e}\), we have \(0<m<n\). As \(q\) has order exactly \(n\), \(q^m\neq1\). Therefore,
\[
u^mv=q^mvu^m\neq vu^m,
\]
so \(u^m\notin Z(B_q(f))\). Similarly, \(q^{-m}\neq1\), and
\[
v^mu=q^{-m}uv^m\neq uv^m,
\]
hence \(v^m\notin Z(B_q(f))\).
\end{proof}

\begin{lemma}\label{lem:deg0}
$Z(B_q(f))$ lies in the component of degree $(0,0)$.
\end{lemma}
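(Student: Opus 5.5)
The plan is to pass to the localization $\widehat{B}=R_X[w';\sigma]$ and count central monomials there. First, the $\mathbb{Z}/e\times\mathbb{Z}/e$ grading of Lemma \ref{lem:grad} extends to $\widehat{B}$: the elements $u^n,v^n$ are homogeneous of degrees $(n,0),(0,n)$, so inverting them is compatible with the grading, and $u^{-1},v^{-1}$ are homogeneous of degrees $(-1,0),(0,-1)$. Next I would check that $\gamma$ is homogeneous of degree $(-1,-1)$. Its summand $u^{-1}v^j$ has degree $(-1,j)$, and since $e\mid (j+1)$ this equals $(-1,-1)$ in $\mathbb{Z}/e\times\mathbb{Z}/e$. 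Similarly $v^{-1}u^j$ has degree $(j,-1)=(-1,-1)$. Hence $w'=w-\gamma$ is homogeneous of degree $(-1,-1)$, and every monomial $u^av^bw'^c$ with $a,b\in\mathbb{Z}$, $c\geq0$ is homogeneous of degree $(a-c,\,b-c)$.

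Since $\widehat{B}=R_X[w';\sigma]$ is a skew Laurent/Ore extension, the monomials $u^av^bw'^c$ form a $\kk$-basis of $\widehat{B}$. Each monomial is an eigenvector for conjugation by $u$, by $v$ and by $w'$. Using $w'u=quw'$, $w'v=q^{-1}vw'$ and $uv=qvu$, one computes
\begin{align*}
  u\,(u^av^bw'^c)\,u^{-1} &= q^{\,b-c}\,u^av^bw'^c, &
  v\,(u^av^bw'^c)\,v^{-1} &= q^{\,c-a}\,u^av^bw'^c,
\end{align*}
with the exact signs to be checked. Given a central $z=\sum \lambda_{a,b,c}u^av^bw'^c$, commuting with $u$ and $v$ forces every monomial in its support to satisfy $b\equiv c$ and $a\equiv c \pmod n$, because $q$ has order $n$. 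Since $e\mid n$, this gives $a-c\equiv 0$ and $b-c\equiv 0 \pmod e$. So every monomial in the support of $z$ has degree $(0,0)$, and $z$ lies in the degree $(0,0)$ component of $\widehat{B}$. Commutation with $w'$ is not even needed. Finally, $Z(B_q(f))\subseteq Z(\widehat{B})$ by the remark in Section \ref{sec:prelim}. The degree $(0,0)$ component of $\widehat{B}$ meets $B_q(f)$ in the degree $(0,0)$ component of $B_q(f)$, since the grading on $B_q(f)$ is the restriction of the one on $\widehat{B}$. This gives the claim.

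The main obstacle is making sure the grading genuinely extends to $\widehat{B}$ and restricts correctly, together with getting the commutation exponents right. I would handle the extension by noting that $X$ consists of homogeneous central elements. The localization of a graded ring at homogeneous central elements is graded, and $B_q(f)\to\widehat{B}$ is an injective graded map because $B_q(f)$ is a domain. The exponent bookkeeping in the display above is routine, but it must be verified carefully. Only the congruences modulo $n$ matter, and those reduce modulo $e$ as required.
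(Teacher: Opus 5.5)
Your proposal is correct and follows the paper's proof essentially step for step. Both arguments extend the grading to the central localization $\widehat{B}$, check that $w'$ is homogeneous of degree $(-1,-1)$, and use the monomial basis $u^av^bw'^{\,c}$ to show that central elements are supported on monomials with $a\equiv c$ and $b\equiv c \pmod n$, which therefore have degree $(0,0)$; your conjugation exponents $q^{\,b-c}$ and $q^{\,c-a}$ are right. Your observation that commutation with $w'$ is not needed is a harmless sharpening: the paper computes all three eigenvalues but only needs those coming from $u$ and $v$ for the degree conclusion.
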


\begin{proof}
The generators of $X$ are homogeneous, so the grading extends to $\widehat{B}$. The summands $u^{-1}v^j$ and $v^{-1}u^j$ of $\gamma$ have degrees $(-1,j)$ and $(j,-1)$; since $e\mid(j+1)$, both equal $(-1,-1)$, which is the degree of $w$. Hence $w'$ is homogeneous of degree $(-1,-1)$.

Now $\widehat{B}$ is $\mathbb{Z}^3$-graded by the exponents of $u,v,w'$, with
one-dimensional components, so $Z(\widehat{B})$ is spanned by the central monomials. For $M=u^av^bw'^c$ the relations $uv=qvu$, $w'u=quw'$, $w'v=q^{-1}vw'$ give
\begin{align*}
  Mu &= q^{\,c-b}uM, & Mv &= q^{\,a-c}vM, & Mw' &= q^{\,b-a}w'M,
\end{align*}
so $M$ is central if and only if $a\equiv b\equiv c\pmod n$. As $e\mid n$, in that case $|M|=(a-c,\,b-c)=(0,0)$. So $Z(\widehat{B})$ lies in degree $(0,0)$, and hence so does
$Z(B_q(f))=Z(\widehat{B})\cap B_q(f)$.
\end{proof}

\section{The ozone groups}

\begin{lemma}\label{lem:diag}
Let $\varphi\in\Aut(B_q(f))$ satisfy $\varphi(u^n)=u^n$ and $\varphi(v^n)=v^n$. Then $\varphi(u)=\xi_1u$ and $\varphi(v)=\xi_2v$ with $\xi_1^n=\xi_2^n=1$.
\end{lemma}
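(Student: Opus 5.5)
We work with the filtration of Lemma~\ref{lem:filt} and leading forms. Put $x=\varphi(u)$, which is nonzero. Since $\lf$ is multiplicative, $\lf(x)^n=\lf(x^n)=\lf(u^n)=U^n$, and so $n\deg(x)=n$, that is, $\deg(x)=1$. Because $s\geq1$, the only PBW monomials in $F_1$ are $1,u,v$, together with $w$ when $s=1$. So $\lf(x)$ is a linear combination of $U,V$, and of $W$ when $s=1$. In the skew polynomial ring $\gr B_q(f)$, the identity $\lf(x)^n=U^n$ rules out any $V$ or $W$ component. To see this, grade $\gr B_q(f)$ by $\mathbb{N}^3$ using the exponents of $U,V,W$, and compare the extreme monomials in a monomial order, for instance lex with $W>V>U$. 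The top monomial of $\lf(x)^n$ is the $n$-th power of the top monomial of $\lf(x)$, up to a nonzero scalar; this holds because $\gr B_q(f)$ is a quantum affine space. That power must equal $U^n$, so the top monomial of $\lf(x)$ is $U$. Hence $\lf(x)=\lambda U$ with $\lambda^n=1$. The same argument gives $\lf(\varphi(v))=\lambda' V$ with $\lambda'^n=1$.

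It remains to kill the lower-order term. Write $x=\lambda u+c$ with $c\in F_0=\kk$. Then
\begin{align*}
  u^n=x^n=(\lambda u+c)^n
\end{align*}
holds in the commutative subalgebra $\kk[u]$, which is a polynomial ring by the PBW basis. Comparing the coefficients of $u^{n-1}$ gives $n\lambda^{n-1}c=0$. Since the characteristic is zero, $c=0$. The case $s=1$ creates no extra difficulty: the top-degree part has already excluded $w$, and $F_0=\kk$ in every case. So $\varphi(u)=\xi_1u$ with $\xi_1=\lambda$, and likewise $\varphi(v)=\xi_2v$.

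The step I expect to need the most care is the leading-form claim that $\lf(x)^n=U^n$ forces $\lf(x)\in\kk U$. If the monomial-order argument feels delicate, an alternative is the following. The element $U$ is normal in $\gr B_q(f)$, and modulo the ideal $(U)$ the image $\bar x$ of $\lf(x)$ satisfies $\bar x^n=0$ in the domain $\kk_{q^{-1}}[V,W]$, so $\bar x=0$. This places $\lf(x)\in\kk U$ directly, and it is the reduction modulo the image of $u$ mentioned after Lemma~\ref{lem:filt}. I would use this second route, since it is cleaner.
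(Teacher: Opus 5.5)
Your proposal is correct, and with the second route you say you would use, it is essentially the paper's proof. The steps match: $\deg\varphi(u)=1$ from multiplicativity of $\lf$; reduction of $\lf(\varphi(u))^n=U^n$ modulo the normal element $U$ into the domain $\gr B_q(f)/(U)$; and removal of the constant term by comparing coefficients of $u^{n-1}$ in characteristic zero. The monomial-order argument you sketch first is also valid, because leading monomials multiply in a quantum affine space, but the paper does not need it.
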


\begin{proof}
Multiplicativity of the degree gives $n\deg\varphi(u)=n$, so $\deg\varphi(u)=1$, and by Lemma \ref{lem:filt}
\begin{align*}
  \varphi(u) &= a_1u+a_2v+a_3w+b, & \lf(\varphi(u)) &= a_1U+a_2V+a_3W,
\end{align*}
where $a_3=0$ unless $s=1$. Also $\lf(\varphi(u))^n=\lf(u^n)=U^n$. The element $U$ is normal in $\gr B_q(f)$ and
\begin{align*}
  \gr B_q(f)/(U) &\cong \kk\langle V,W\rangle/(WV-q^{-1}VW),
\end{align*}
a skew polynomial ring in two variables, hence a domain. Reducing $\lf(\varphi(u))^n=U^n$ modulo $(U)$ gives $(a_2V+a_3W)^n=0$, so $a_2=a_3=0$. Thus $\lf(\varphi(u))=a_1U$ with $a_1^n=1$, and $\varphi(u)=a_1u+b$ with $b\in\kk$. Expanding $(a_1u+b)^n=u^n$ and comparing coefficients of $u^{n-1}$ gives $na_1^{n-1}b=0$, so $b=0$ in characteristic zero. The argument for $v$ is the same.
\end{proof}

\begin{lemma}\label{lem:lau}
$B_q(f)\cap\kk_q[u^{\pm1},v^{\pm1}]=\kk_q[u,v]$.
\end{lemma}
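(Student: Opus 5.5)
The inclusion $\kk_q[u,v]\subseteq B_q(f)\cap\kk_q[u^{\pm1},v^{\pm1}]$ is clear, since both rings contain $u$ and $v$. For the reverse inclusion the plan is to write elements of $\widehat{B}$ uniquely in a basis adapted to $u,v,w'$, and then use the PBW basis of $B_q(f)$ to read off which Laurent monomials can occur.

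First, note that $\widehat{B}=R_X[w';\sigma]$ is a free left $R_X$-module on $\{w'^c\}_{c\ge0}$. Since $w=w'+\gamma$, we have
\begin{align*}
  w^c &= w'^c+(\text{terms in } R_Xw'^{c'} \text{ with } c'<c).
\end{align*}
So $\{w^c\}$ is also a left $R_X$-basis of $\widehat{B}$, related to $\{w'^c\}$ by a unitriangular change of basis. Consequently $\widehat{B}=\bigoplus_c R_Xw^c$.

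Now take $x\in B_q(f)\cap R_X$. As an element of $B_q(f)$, it has a PBW expansion $x=\sum_c r_cw^c$ with $r_c\in\kk_q[u,v]$. Since $\kk_q[u,v]\subseteq R_X$, this is also an expansion of $x$ in the $R_X$-basis $\{w^c\}$ of $\widehat{B}$. On the other hand, $x\in R_X$ means its expansion is concentrated in $c=0$. By uniqueness of coordinates, $r_c=0$ for $c>0$ and $x=r_0\in\kk_q[u,v]$.

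The main point needing care is that $\{w^c\}$ really is a left $R_X$-basis of $\widehat{B}$. An alternative route is to check that $\widehat{B}$ has the basis $\{u^av^bw^c : a,b\in\mathbb{Z},\,c\ge0\}$ via the Ore extension structure $R_X[w;\sigma,\delta]$: the localization of $R[w;\sigma,\delta]$ at the central set $X\subseteq R$ is $R_X[w;\sigma,\delta]$, where $\sigma$ and $\delta$ extend uniquely. Either way, the key fact is that $\widehat{B}$ is a free left $R_X$-module on $\{w^c\}$, with $B_q(f)$ sitting inside as the $\kk_q[u,v]$-span of the same basis. Injectivity of $B_q(f)\to\widehat{B}$ holds because $B_q(f)$ is a domain and $X$ is a central Ore set of nonzero elements.

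Once freeness is established, the intersection statement is immediate by comparing coefficients.
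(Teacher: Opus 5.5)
Your proof is correct and takes essentially the paper's approach: both rest on $\widehat{B}$ being free over $\kk_q[u^{\pm1},v^{\pm1}]$ on $\{w'^{\,c}\}_{c\geq0}$, together with the triangularity $w^c\in w'^{\,c}+\sum_{c'<c}\kk_q[u^{\pm1},v^{\pm1}]\,w'^{\,c'}$ that comes from $w=w'+\gamma$. The paper phrases this as the nonvanishing of the top $w'$-degree component of a PBW expansion, whereas you repackage it as $\{w^c\}$ being an $R_X$-basis and then compare coefficients, which amounts to the same argument.
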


\begin{proof}
$\widehat{B}$ is free over $\kk_q[u^{\pm1},v^{\pm1}]$ on $\{w'^{\,c}\}_{c\geq0}$, and $w=w'+\gamma$ with $\gamma\in\kk_q[u^{\pm1},v^{\pm1}]$. So for
$x=\sum\lambda_{abc}u^av^bw^c$ in $B_q(f)$ with $C=\max\{c:\lambda_{abc}\neq0\}$, the component of $x$ in $w'$-degree $C$ is
$\bigl(\sum_{a,b}\lambda_{abC}u^av^b\bigr)w'^{\,C}\neq0$. If $x$ lies in
$\kk_q[u^{\pm1},v^{\pm1}]$ then $C=0$, so $x\in\kk_q[u,v]$.
\end{proof}

\begin{proposition}\label{prop:key}
Let $\varphi\in\Aut(B_q(f))$ fix $u^n$, $v^n$ and $\Omega$. Then there are
$\xi_1,\xi_2\in\mu_e$ with
\begin{align*}
  \varphi(u) &= \xi_1u, & \varphi(v) &= \xi_2v, & \varphi(w) &= (\xi_1\xi_2)^{-1}w,
\end{align*}
so $\varphi$ is a grading automorphism for the grading of Lemma \ref{lem:grad}.
\end{proposition}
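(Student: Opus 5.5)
By Lemma \ref{lem:diag}, $\varphi$ already has the right shape on $u$ and $v$: $\varphi(u)=\xi_1u$ and $\varphi(v)=\xi_2v$ with $\xi_1^n=\xi_2^n=1$. So the plan is to use only the hypothesis $\varphi(\Omega)=\Omega$ for two things: to pin down $\varphi(w)$, and to cut $\xi_1,\xi_2$ from $\mu_n$ down to $\mu_e$.

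Apply $\varphi$ to
\begin{align*}
  \Omega = uvw+\sum_{j\in\supp(f)}\beta_jc_j\,u^{j+1}-q\sum_{j\in\supp(f)}\alpha_jc_j\,v^{j+1}.
\end{align*}
This gives
\begin{align*}
  \Omega=\varphi(\Omega) = \xi_1\xi_2\,uv\,\varphi(w)+\sum_j\beta_jc_j\xi_1^{j+1}u^{j+1}-q\sum_j\alpha_jc_j\xi_2^{j+1}v^{j+1}.
\end{align*}
Subtracting from the original expression for $\Omega$ and solving for $\varphi(w)$ inside $\widehat{B}$, where $u$ and $v$ are invertible, we get $\varphi(w)=(\xi_1\xi_2)^{-1}w+r$ with
\begin{align*}
  r=(\xi_1\xi_2)^{-1}(uv)^{-1}\Bigl(\sum_j\beta_jc_j(1-\xi_1^{j+1})u^{j+1}-q\sum_j\alpha_jc_j(1-\xi_2^{j+1})v^{j+1}\Bigr)\in\kk_q[u^{\pm1},v^{\pm1}].
\end{align*}
Since $\varphi(w)$ and $w$ both lie in $B_q(f)$, so does $r$. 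Lemma \ref{lem:lau} then gives $r\in\kk_q[u,v]$.

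Next, rewrite $r$ in the Laurent basis. Up to nonzero scalars (powers of $q$), $(uv)^{-1}u^{j+1}$ is the monomial $u^{j}v^{-1}$ and $(uv)^{-1}v^{j+1}$ is $u^{-1}v^{j}$. These monomials are pairwise distinct as $j$ ranges over $\supp(f)$, and none of them lies in $\kk_q[u,v]$. Since $r\in\kk_q[u,v]$, every coefficient must vanish. Because $\alpha_j,\beta_j,c_j\neq0$ for $j\in\supp(f)$, this forces $\xi_1^{j+1}=\xi_2^{j+1}=1$ for all $j\in\supp(f)$. Combined with $\xi_i^n=1$, we get $\xi_i\in\mu_e$, since $e=\gcd(n,\{j+1\})$. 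Then $r=0$ and $\varphi(w)=(\xi_1\xi_2)^{-1}w$, which is exactly the grading automorphism of Lemma \ref{lem:grad}.

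The only step needing care is the linear-independence bookkeeping in $\kk_q[u^{\pm1},v^{\pm1}]$. One must check that $(uv)^{-1}u^{j+1}$ really is a scalar multiple of the single monomial $u^jv^{-1}$; this follows from $uv=qvu$, because reordering the factors only produces powers of $q$. One must also note that the two families $\{u^jv^{-1}\}$ and $\{u^{-1}v^j\}$ cannot coincide with each other or cancel: an overlap would require $j=-1$, which is impossible. Everything else is direct substitution.
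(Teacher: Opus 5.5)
Your proof is correct and is essentially the paper's argument. The paper writes $\Omega=uvw'$ with $w'=w-\gamma$, extends $\varphi$ to $\widehat{B}$ to get $\varphi(w')=(\xi_1\xi_2)^{-1}w'$, and then applies Lemma \ref{lem:lau} to $\varphi(w)-(\xi_1\xi_2)^{-1}w=\varphi(\gamma)-(\xi_1\xi_2)^{-1}\gamma$, which is exactly your $r$; by applying $\varphi$ to the explicit formula for $\Omega$ and dividing by $uv$ in $\widehat{B}$, you reach the same Laurent polynomial without introducing $w'$ or extending $\varphi$ to $\widehat{B}$.
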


\begin{proof}
Write $\varphi(u)=\xi_1u$, $\varphi(v)=\xi_2v$ as in Lemma \ref{lem:diag}; then $\varphi$ extends to $\widehat{B}$, as noted in Section \ref{sec:prelim}. Since $\Omega=uvw'$ and $uv$ is invertible in the domain $\widehat{B}$,
\begin{align*}
  uvw' &= \varphi(\Omega) = \xi_1\xi_2\,uv\,\varphi(w'), & \varphi(w') &= \zeta w',
  & \zeta &= (\xi_1\xi_2)^{-1}.
\end{align*}
Writing $w=w'+\gamma$, and using $\varphi(u^{-1})=\xi_1^{-1}u^{-1}$ and
$\varphi(v^{-1})=\xi_2^{-1}v^{-1}$,
\begin{align*}
  \varphi(w)-\zeta w &= \varphi(\gamma)-\zeta\gamma\\
  &= \sum_{j\in\supp(f)}\alpha_jc_j\bigl(\xi_1^{-1}\xi_2^{\,j}-\zeta\bigr)u^{-1}v^j
   -\sum_{j\in\supp(f)}\beta_jc_j\bigl(\xi_2^{-1}\xi_1^{\,j}-\zeta\bigr)v^{-1}u^j.
\end{align*}
The left-hand side lies in $B_q(f)$ and the right-hand side in
$\kk_q[u^{\pm1},v^{\pm1}]$, so both lie in $\kk_q[u,v]$ by Lemma \ref{lem:lau}. The
monomials $u^{-1}v^j$ and $v^{-1}u^j$ have pairwise distinct exponent vectors $(-1,j)$ and
$(j,-1)$, and none of them lies in $\kk_q[u,v]$. Since $\alpha_j,\beta_j,c_j\neq0$ for
$j\in\supp(f)$, every coefficient vanishes: from
$\xi_1^{-1}\xi_2^{\,j}=\zeta=(\xi_1\xi_2)^{-1}$ we get $\xi_2^{\,j+1}=1$, from
$\xi_2^{-1}\xi_1^{\,j}=\zeta$ we get $\xi_1^{\,j+1}=1$, and $\varphi(w)=\zeta w$. With
$\xi_1^n=\xi_2^n=1$ this puts $\xi_1,\xi_2$ in $\mu_e$, so $\varphi$ is the grading
automorphism $(\xi_1,\xi_2)$.
\end{proof}

\begin{theorem}\label{thm:main}
\begin{align*}
  \Oz(B_q(f)) &\cong \mu_e\times\mu_e,
\end{align*}
acting as in Proposition \ref{prop:key}. In particular, the group is trivial if and only
if $e=1$.
\end{theorem}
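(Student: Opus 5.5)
The plan is to prove both inclusions: the grading automorphisms lie in $\Oz(B_q(f))$, and every element of $\Oz(B_q(f))$ is one of them. Together with a check that the identification respects composition, this identifies the group.

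\emph{Grading automorphisms are ozone.} For $(\xi_1,\xi_2)\in\mu_e\times\mu_e$, Lemma \ref{lem:grad} shows that the defining relations are homogeneous for the $\mathbb{Z}/e\times\mathbb{Z}/e$ grading. Hence the assignment
\[
u\mapsto\xi_1u,\qquad v\mapsto\xi_2v,\qquad w\mapsto(\xi_1\xi_2)^{-1}w
\]
is the automorphism acting on the component of degree $(i,k)$ by the scalar $\xi_1^{i}\xi_2^{k}$. This scalar is well defined because $\xi_1,\xi_2\in\mu_e$. The inverse is given by $(\xi_1^{-1},\xi_2^{-1})$. By Lemma \ref{lem:deg0} the center lies in degree $(0,0)$, so these automorphisms fix $Z(B_q(f))$ pointwise and belong to $\Oz(B_q(f))$.

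\emph{Every ozone automorphism is a grading automorphism.} Let $\varphi\in\Oz(B_q(f))$. The elements $u^n$, $v^n$ and $\Omega$ are central, by \cite[Lemma 3.2]{GY} and the computation in Section \ref{sec:prelim}, so $\varphi$ fixes all three. Proposition \ref{prop:key} then gives $\varphi=(\xi_1,\xi_2)$ with $\xi_1,\xi_2\in\mu_e$.

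\emph{Group structure.} The map $\mu_e\times\mu_e\to\Oz(B_q(f))$ sending $(\xi_1,\xi_2)$ to the grading automorphism is a homomorphism, since scalings on generators compose componentwise. It is injective, since $\xi_1$ and $\xi_2$ can be read off from the images of $u$ and $v$. By the previous step it is surjective, so it is an isomorphism. Consequently $|\Oz(B_q(f))|=e^2$, and the group is trivial if and only if $e=1$. For $e>1$ one can also exhibit nontrivial elements directly: conjugation by the normal elements $u^m$ and $v^m$ of Proposition \ref{prop:norm} fixes the center and is nontrivial because $q^m\neq1$.

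There is no real obstacle remaining. All the substantive work, namely controlling the images of $u$ and $v$ and then of $w$, was done in Lemma \ref{lem:diag} and Proposition \ref{prop:key}. The only point to be careful about is the well-definedness of the grading action, which relies on $e\mid(j+1)$ for every $j\in\supp(f)$; this is already contained in Lemma \ref{lem:grad}.
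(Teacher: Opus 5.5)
Your proposal is correct and follows the paper's proof: one inclusion comes from Proposition~\ref{prop:key} applied to the central elements $u^n$, $v^n$ and $\Omega$, and the other from Lemma~\ref{lem:deg0}, which places the center in degree $(0,0)$, so every character of the $\mathbb{Z}/e\times\mathbb{Z}/e$ grading fixes it pointwise. Your explicit check that sending $(\xi_1,\xi_2)$ to its grading automorphism is an injective homomorphism spells out what the paper leaves implicit.
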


\begin{proof}
An ozone automorphism fixes the central elements $u^n$, $v^n$ and $\Omega$. By Proposition~\ref{prop:key}, $\varphi$ is a grading automorphism. Conversely, let $(\xi_1,\xi_2)\in\mu_e\times\mu_e$. The corresponding grading automorphism acts on $B_{(a,b)}$ by multiplication by
$\xi_1^a\xi_2^b$. Since $Z(B_q(f))\subseteq B_{(0,0)}$ by
Lemma~\ref{lem:deg0}, it fixes the center pointwise. Hence it is an
ozone automorphism.
\end{proof}

\begin{corollary}\label{cor:detect}
\begin{align*}
  \Oz(B_q(f)) &= \{\varphi\in\Aut(B_q(f)) :
  \varphi(u^n)=u^n,\ \varphi(v^n)=v^n,\ \varphi(\Omega)=\Omega\}.
\end{align*}
\end{corollary}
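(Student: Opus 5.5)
The plan is to prove the two inclusions separately, using only Proposition~\ref{prop:key}, Lemma~\ref{lem:deg0} and the centrality of $u^n$, $v^n$ and $\Omega$.

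\emph{The inclusion $\subseteq$.} Let $\varphi\in\Oz(B_q(f))$. By definition $\varphi$ fixes every central element. By \cite[Lemma 3.2]{GY} the elements $u^n$ and $v^n$ are central, and by \cite[Proposition 3.9]{GY} (re-derived in Section~\ref{sec:prelim} from $\Omega=uvw'$) so is $\Omega$. Hence $\varphi$ fixes all three, and $\varphi$ lies in the right-hand set.

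\emph{The inclusion $\supseteq$.} Let $\varphi\in\Aut(B_q(f))$ fix $u^n$, $v^n$ and $\Omega$. By Proposition~\ref{prop:key}, $\varphi$ is the grading automorphism attached to some $(\xi_1,\xi_2)\in\mu_e\times\mu_e$ for the $\mathbb{Z}/e\times\mathbb{Z}/e$-grading of Lemma~\ref{lem:grad}. Such an automorphism multiplies the homogeneous component of degree $(a,b)$ by $\xi_1^a\xi_2^b$; this is well defined because $\xi_i^e=1$. By Lemma~\ref{lem:deg0} the center lies in degree $(0,0)$, so $\varphi$ fixes $Z(B_q(f))$ pointwise. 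Thus $\varphi\in\Oz(B_q(f))$. This is exactly the converse half of the proof of Theorem~\ref{thm:main}, and it can be cited from there.

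There is no real obstacle: all the substantive work sits in Proposition~\ref{prop:key} and Lemma~\ref{lem:deg0}. The one point to state carefully is that the hypotheses of Proposition~\ref{prop:key} require only that $\varphi$ fix $u^n$, $v^n$ and $\Omega$, not that it fix the whole center. That is what makes the three-element criterion sufficient, even though $Z(B_q(f))$ itself is not known explicitly \cite[Question 5.3]{GY}.
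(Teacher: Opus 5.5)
Your proposal is correct and is essentially the paper's proof: the inclusion $\subseteq$ holds because $u^n$, $v^n$ and $\Omega$ are central, and the inclusion $\supseteq$ combines Proposition~\ref{prop:key} (which makes $\varphi$ the grading automorphism of some $(\xi_1,\xi_2)\in\mu_e\times\mu_e$) with Lemma~\ref{lem:deg0} (which puts the center in degree $(0,0)$). Your extra remark that the character action is well defined because $\xi_i^e=1$ is a correct detail that the paper leaves implicit.
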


\begin{proof}
One inclusion holds because $u^n,v^n,\Omega$ are central; the other combines Proposition
\ref{prop:key} with Lemma \ref{lem:deg0}.
\end{proof}

\begin{remark}
The center of $B_q(f)$ at a root of unity is not known \cite[Question 5.3]{GY}. Corollary \ref{cor:detect} does not require an explicit description of the center: the elements $u^n$, $v^n$ and $\Omega$
suffice.
\end{remark}

\begin{corollary}\label{cor:normal}
Let $N$ be the multiplicative monoid of regular normal elements of $B_q(f)$, and $m=n/e$.
Say $a\sim b$ when $z_1a=z_2b$ for nonzero central $z_1,z_2$, as in
\cite[(E1.10.1)]{CGWZ2}. Then
\begin{align*}
  N/\!\sim\ &\cong\ \mu_e\times\mu_e,
\end{align*}
generated by the classes of $u^m$ and $v^m$. In particular every normal element of
$B_q(f)$ is central if and only if $e=1$.
\end{corollary}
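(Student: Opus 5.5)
The plan is to combine the general result of \cite{CGWZ2} relating normal elements and the ozone group with the explicit computation of Theorem \ref{thm:main}. For a noetherian PI AS-regular (or, more generally, Calabi--Yau, module-finite over an affine center) domain, \cite{CGWZ2} gives an injective group map $N/\!\sim\ \to \Oz(A)$ sending the class of a regular normal element $a$ to the conjugation $\eta_a$. Indeed, $\eta_a$ fixes the center, and $\eta_a=\eta_b$ forces $ab^{-1}$ to be central in the quotient division ring, which yields $z_1a=z_2b$ with $z_1,z_2$ nonzero central. I would invoke this directly, since $B_q(f)$ satisfies these hypotheses by \cite[Theorem A, Proposition 3.10]{GY}. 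If only injectivity is available, I would reprove it quickly: $ab^{-1}$ lies in the center of the quotient division ring, and a central element of $Q(B)$ can be written as $z^{-1}z'$ with $z,z'\in Z(B)$, because $B$ is module-finite over its center.

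Surjectivity I would show by hand. By Proposition \ref{prop:norm}, $u^m$ and $v^m$ are regular normal elements. From the relations there, $\eta_{u^m}$ sends $u\mapsto u$, $v\mapsto q^{m}v$ and $w\mapsto q^{-m}w$. Since $q^m$ is a primitive $e$-th root of unity, this is the grading automorphism $(1,q^m)$. Similarly, $\eta_{v^m}$ is $(q^{-m},1)$ up to orientation. These two elements generate $\mu_e\times\mu_e$, so by Theorem \ref{thm:main} the map is surjective. Hence $N/\!\sim\ \cong\Oz(B_q(f))\cong\mu_e\times\mu_e$, generated by the classes of $u^m$ and $v^m$.

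For the final sentence: if $e=1$, the group is trivial, so every regular normal element is equivalent to $1$. It is then central, since $z_1a=z_2$ gives $a=z_1^{-1}z_2$ in $Q(B)$, which commutes with $B$. Every nonzero element is regular because $B_q(f)$ is a domain, and $0$ is central, so all normal elements are central. If $e>1$, Proposition \ref{prop:norm} exhibits the noncentral normal element $u^m$.

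The main obstacle is the injectivity step, namely identifying the kernel of $a\mapsto\eta_a$ with $\sim$. I would cite \cite[Section 1]{CGWZ2} for this. As a fallback, I would argue directly that if $\eta_a=\eta_b$ then $ab^{-1}$ is central in $Q(B)$, and then use that $Q(Z)=Z(Q(B))$ for prime PI rings, via Posner's theorem.
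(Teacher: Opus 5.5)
Your proof is correct. It differs from the paper in how you get surjectivity of $a\mapsto\eta_a$.

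The paper takes both halves from \cite[Lemma 1.9(1)]{CGWZ2}:
\begin{itemize}
\item surjectivity: every ozone automorphism of a prime PI algebra module-finite over its center is some $\eta_a$ with $a$ regular normal, which amounts to Skolem--Noether in the quotient division ring followed by clearing a central denominator;
\item injectivity: $\eta_a=\eta_b$ exactly when $a\sim b$.
\end{itemize}
It then uses Theorem~\ref{thm:main} only to name the group, and computes $\eta_{u^m}$ and $\eta_{v^m}$ afterwards merely to identify generators.

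You observe that once Theorem~\ref{thm:main} is known, that same generator computation already proves surjectivity. The image of $N$ is a submonoid of the finite group $\mu_e\times\mu_e$ containing a generating set, hence everything. So from the general theory you need only the injectivity half, and you justify that directly from $Z(Q(B))=Q(Z(B))$.

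What each route buys:
\begin{itemize}
\item Yours is more self-contained and makes the Skolem--Noether step unnecessary.
\item The paper's shows that $N/\!\sim\,\to\Oz(B_q(f))$ is onto independently of the explicit computation. That is the general mechanism tying ozone automorphisms to normal elements, and it would survive even if the generators were unknown.
\end{itemize}

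You also treat explicitly the passage from regular normal elements to all normal elements, via $B_q(f)$ being a domain, which the paper leaves implicit.

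One cosmetic point: with the paper's convention $\eta_a(b)=a^{-1}ba$ one gets $\eta_{u^m}=(1,q^{-m})$ and $\eta_{v^m}=(q^{m},1)$. Your formulas are those of $b\mapsto aba^{-1}$, i.e.\ the inverses, which of course generate the same group.
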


\begin{proof}
For $a\in N$, let
\[
\eta_a(b)=a^{-1}ba.
\]
Since \(a\) is normal, \(\eta_a\in\operatorname{Aut}(B_q(f))\). Moreover, for every \(z\in Z(B_q(f))\),
\[
\eta_a(z)=a^{-1}za=z.
\]

Thus $\eta_a\in\Oz(B_q(f))$.

By \cite[Lemma 1.9(1)]{CGWZ2}, the map
\[
N\longrightarrow\Oz(B_q(f)),\qquad a\longmapsto\eta_a,
\]
is surjective, since $B_q(f)$ is a prime PI algebra that is module-finite
over its center. Moreover,
\[
\eta_a=\eta_b
\]
if and only if $a\sim b$; hence
\[
N/\!\sim\ \cong\Oz(B_q(f)).
\]
By Theorem~\ref{thm:main},
\[
N/\!\sim\ \cong\mu_e\times\mu_e.
\]

It remains to identify generators. Put $m=n/e$. By
Proposition~\ref{prop:norm},
\[
\eta_{u^m}=(1,q^{-m}),
\qquad
\eta_{v^m}=(q^m,1).
\]
Since $q$ has order $n$,
\[
\operatorname{ord}(q^m)
=\frac{n}{\gcd(n,m)}
=\frac{n}{m}
=e.
\]
Hence $(1,q^{-m})$ and $(q^m,1)$ generate
$\mu_e\times\mu_e$. Therefore the classes of $u^m$ and $v^m$
generate $N/\!\sim$.

Finally, $N/\!\sim$ is trivial if and only if $e=1$. Since the kernel of
$a\mapsto\eta_a$ consists precisely of the central elements, this is
equivalent to every normal element of $B_q(f)$ being central.
\end{proof}

\begin{proposition}\label{prop:phi}
With the notation of Section \ref{sec:prelim},
\begin{align*}
  \Oz(B_q(f)) &= \{\phi_{a,\xi,0} : a\in\mu_e \text{ and } \xi a\in\mu_e\},
\end{align*}
and $\phi_{a,\xi,0}$ is the grading automorphism $(a,\xi a)$.
\end{proposition}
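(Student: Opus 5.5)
The plan is to prove the two inclusions by comparing the formula for $\phi_{a,\xi,h}$ with the description of $\Oz(B_q(f))$ from Theorem \ref{thm:main} and Proposition \ref{prop:key}.

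\emph{Inclusion $\supseteq$.} Let $a\in\mu_e$ and $\xi a\in\mu_e$. We first check that $\phi_{a,\xi,0}$ is defined, i.e.\ that $a^{d-i}=1$ and $\xi^{i+1}=1$ for every $i\in\supp(f)$.
\begin{itemize}
\item Since $e\mid(d+1)$ and $e\mid(i+1)$, we have $e\mid(d-i)$. Hence $a^{d-i}=1$.
\item From $\xi=(\xi a)a^{-1}\in\mu_e$ and $e\mid(i+1)$ we get $\xi^{i+1}=1$.
\end{itemize}
Now compare $\phi_{a,\xi,0}$ with the grading automorphism $(\xi_1,\xi_2)=(a,\xi a)$. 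On $u$ and $v$ they plainly agree. On $w$, the grading automorphism gives $(a\cdot\xi a)^{-1}w=\xi^{-1}a^{-2}w$, while $\phi_{a,\xi,0}(w)=\xi^{-1}a^{d-1}w$. These coincide because $a^{d+1}=1$ (as $e\mid d+1$), so $a^{d-1}=a^{-2}$. Thus $\phi_{a,\xi,0}$ is the grading automorphism $(a,\xi a)$, which lies in $\Oz(B_q(f))$ by Theorem \ref{thm:main}.

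\emph{Inclusion $\subseteq$.} Let $\varphi\in\Oz(B_q(f))$. By Proposition \ref{prop:key}, $\varphi$ is the grading automorphism $(\xi_1,\xi_2)$ for some $\xi_1,\xi_2\in\mu_e$. Set $a=\xi_1$ and $\xi=\xi_2\xi_1^{-1}$. Then $a\in\mu_e$ and $\xi a=\xi_2\in\mu_e$. By the first part, $\phi_{a,\xi,0}$ is defined and equals $(\xi_1,\xi_2)=\varphi$.

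\emph{Main obstacle.} There is no real obstacle; the proof is bookkeeping. The one point needing care is the exponent of $a$ on $w$, where $a^{d-1}=a^{-2}$ relies on $e\mid d+1$. That divisibility holds because $d\in\supp(f)$, as noted in Section \ref{sec:prelim}.

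A further remark explains why only $h=0$ appears. If some $\phi_{a,\xi,h}$ with $h\neq0$ were in the ozone group, Proposition \ref{prop:key} would force $\phi_{a,\xi,h}(w)$ to be a scalar multiple of $w$, so $h=0$. The equality of sets therefore holds as stated.
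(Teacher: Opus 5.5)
Your proposal is correct and follows the same route as the paper. Like the paper, you set $a=\xi_1$ and $\xi=\xi_2\xi_1^{-1}$, check $a^{d-i}=\xi^{i+1}=1$ from $e\mid(i+1)$ and $e\mid(d+1)$, identify $\xi^{-1}a^{d-1}=\xi^{-1}a^{-2}=(\xi_1\xi_2)^{-1}$, and get both inclusions from Theorem \ref{thm:main} and Proposition \ref{prop:key}; your side remark that $h=0$ is forced is also correct, and the paper reaches that conclusion separately by a direct computation of $\phi_{a,\xi,h}(\Omega)$.
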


\begin{proof}
Given $(\xi_1,\xi_2)\in\mu_e\times\mu_e$, set $a=\xi_1$ and $\xi=\xi_2\xi_1^{-1}$. Since
$e\mid(i+1)$ for $i\in\supp(f)$ we get $\xi_1^{\,i+1}=\xi_2^{\,i+1}=1$, hence
$\xi^{i+1}=1$, and $e\mid(d+1)$ gives $a^{d+1}=1$, so $a^{d-i}=a^{(d+1)-(i+1)}=1$. Thus
$\phi_{a,\xi,0}$ is defined and
\begin{align*}
  \xi^{-1}a^{d-1} &= \xi^{-1}a^{-2} = (\xi_1\xi_2)^{-1},
\end{align*}
so it is the grading automorphism $(\xi_1,\xi_2)$. With Theorem \ref{thm:main} this gives
both inclusions.

The converse can also be read off from $\Omega$ alone, without appealing to Theorem
\ref{thm:main}; this is the computation of \cite[Theorem 3.13]{GY}. Suppose that \(\varphi=\varphi_{a,\xi,h}\) fixes the central elements. From $\phi(u^n)=a^nu^n$ and
$\phi(v^n)=(\xi a)^nv^n$ we get $a^n=(\xi a)^n=1$, and
\begin{align*}
  \phi(\Omega) = a^{d+1}uvw+a^2\xi\,uvh
   +\sum_{j\in\supp(f)}\beta_jc_ja^{j+1}u^{j+1}-q\sum_{j\in\supp(f)}\alpha_jc_j(\xi a)^{j+1}v^{j+1}.
\end{align*}
The part of $\phi(\Omega)-\Omega$ of $w$-degree zero lies in $\kk_q[u,v]$; every monomial
of $uvh$ has positive degree in both $u$ and $v$, while $u^{j+1}$ and $v^{j+1}$ do not, so
the two contributions vanish separately. Hence $h=0$ together with
\begin{align*}
  a^{j+1} &= 1, & (\xi a)^{j+1} &= 1 && (j\in\supp(f)),
\end{align*}
which with $a^n=(\xi a)^n=1$ puts $a$ and $\xi a$ in $\mu_e$.
\end{proof}

\begin{example}\label{ex:small}
Take $n=4$, so $q=i$, and $f(t)=t$. Then $\supp(f)=\{1\}$ and $4\nmid2$, while
$e=\gcd(4,2)=2$ and $m=2$. Here $[2]_{q^2}=1+q^2=0$, so $u^2$ is normal and not central,
and
\begin{align*}
  \eta:\quad u&\mapsto u, & v&\mapsto -v, & w&\mapsto -w
\end{align*}
is a nontrivial ozone automorphism: it fixes $u^4$, $v^4$ and
$\Omega=uvw+\tfrac{i}{2}u^2+\tfrac12v^2$. In the notation of Proposition \ref{prop:phi}
this is $\phi_{1,-1,0}$, and $\Oz(B_i(t))\cong\mu_2\times\mu_2$.
\end{example}

\begin{example}\label{ex:six}
Fix $n=6$ and compare three polynomials. For $f=t$ we get $e=\gcd(6,2)=2$ and $m=3$; for
$f=t^2$ we get $e=\gcd(6,3)=3$ and $m=2$, so
\begin{align*}
  \Oz(B_q(t)) &\cong \mu_2\times\mu_2, & \Oz(B_q(t^2)) &\cong \mu_3\times\mu_3.
\end{align*}

For $f=t+t^2$, though, $e=\gcd(6,2,3)=1$, so the ozone group is trivial and by Corollary
\ref{cor:normal} every normal element is central. The candidate $u^3$ fails:
\begin{align*}
  wu^3 &= q^3u^3w+[3]_{q^2}\,vu^2+[3]_{q^3}\,v^2u^2 = -u^3w+v^2u^2,
\end{align*}
because $q^2$ has order $3$ and $q^3$ has order $2$, so $[3]_{q^2}=0$ but $[3]_{q^3}=1$.
The summand $t$ kills its term and $t^2$ does not, which is what taking a gcd over the
whole support amounts to.
\end{example}

\begin{remark}\label{rem:cgwz}
For $f=t^2$ the algebra is the nodal cubic algebra $B_q$ of \cite[(E1.5.1)]{CGWZ2}. Their
generators $x,y,z$ are our $u,v,w$, and we keep $u,v,w$ below; with that dictionary
Theorem \ref{thm:main} returns \cite[Proposition 1.8]{CGWZ2}.

The two arguments need different things. The one in \cite{CGWZ2} runs over graded
automorphisms of a quadratic AS-regular algebra, so a degree count places $\varphi(u)$ in
$\kk u+\kk v$ immediately; the corresponding step here is Lemma \ref{lem:diag}, which
replaces the degree count by leading forms and so covers members like $f=t$, where there
is no such grading.

Their Lemma 1.7 and Proposition 1.8 both carry the restriction $n\neq1,3$. For $f=t^2$ our
standing hypothesis $n\nmid(j+1)$ reads $n\nmid3$, which is the same restriction: what
looks there like a technical exclusion is exactly the condition that makes $\alpha_2$,
$\beta_2$, and therefore $\Omega$, exist. The case $n=2$ is treated separately in
\cite[Example 1.6]{CGWZ2}; here it needs no separate treatment, since $e=\gcd(2,3)=1$ and
Theorem \ref{thm:main} already gives a trivial group. And the $3$ in the answer is $j+1$
for the single $j\in\supp(t^2)$.

In \cite[Lemma 1.7]{CGWZ2} the center of $B_q$ is described using $z^n$, that is $w^n$. That element is not always central: \cite[Remark 3.7]{GY} works out $n=4$, where $w^4+2(1-q)\Omega w$ is central and $w^4$ is not. The ozone group is a different matter, and Corollary \ref{cor:detect} shows it: only $u^n$, $v^n$ and $\Omega$ go into determining it, so Proposition 1.8 stands.
\end{remark}

\begin{remark}
Every $B_q(f)$ is Calabi--Yau \cite[Theorem A]{GY}, so the algebras with $e>1$ give an infinite family of Calabi--Yau algebras with nontrivial ozone group. The groups obtained are abelian, in agreement with \cite{LWZ} for the AS-regular members.
\end{remark}

\begin{proposition}\label{prop:rank}
$\rk_{Z(B_q(f))}B_q(f)=n^2$ and $B_q(f)$ has PI degree $n$. Consequently
\begin{align*}
  |\Oz(B_q(f))| &= e^2, & \rk_Z\big/|\Oz| &= m^2.
\end{align*}
\end{proposition}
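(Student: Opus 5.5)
We will compute the rank by passing to the localization $\widehat{B}=R_X[w';\sigma]$ and to the full ring of fractions. The plan is to first identify $Z(\widehat{B})$, then $Z(B_q(f))$ up to localization, and finally count dimensions over the fraction field of the center.

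By the proof of Lemma~\ref{lem:deg0}, $Z(\widehat{B})$ is spanned by the monomials $u^av^bw'^c$ with $a\equiv b\equiv c\pmod n$, where $a,b\in\mathbb{Z}$ and $c\geq0$. It is therefore generated by $u^{\pm n}$, $v^{\pm n}$, $w'^n$ and $uvw'=\Omega$. The relation $\Omega^n=\lambda\,u^nv^nw'^n$ holds for a root of unity $\lambda$; it follows by normal-ordering $(uvw')^n$ with $w'(uv)=(uv)w'$ and $uv=qvu$. Hence $Z(\widehat{B})=\kk[u^{\pm n},v^{\pm n},w'^n,\Omega]=\kk[u^{\pm n},v^{\pm n},\Omega]$, because $w'^n=\lambda^{-1}\Omega^n u^{-n}v^{-n}$. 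This is a Laurent–polynomial ring in three variables, $\widehat{B}$ is free over it, and a basis is $\{u^av^bw'^c: 0\leq a,b<n,\ 0\leq c<n\}$ modulo the identification through $\Omega$. The cleaner basis is $\{u^av^b w'^c\}$ with $0\le a,b<n$ and $c$ ranging over a full period; concretely, $\widehat{B}$ has rank $n^2$ over $\kk[u^{\pm n},v^{\pm n},\Omega]$, with basis $\{u^av^b: 0\le a,b<n\}$. The reason is that every monomial $u^av^bw'^c$ equals $u^{a-c}v^{b-c}\Omega^c$ times a unit scalar, so $w'$ is absorbed into $\Omega$. Linear independence follows by comparing $\mathbb{Z}^3$-degrees: the exponent vectors $(a-c,b-c,0)+c(1,1,1)$ are distinct modulo $n\mathbb{Z}^2\times\{0\}$, so the basis elements have pairwise distinct exponent vectors.

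Since $X$ is a central Ore set, the rank of $B_q(f)$ over $Z(B_q(f))$ is read off in the total quotient ring. That is, $\rk_Z B_q(f)=\dim_{Q(Z)}Q(B_q(f))$, and this agrees with the rank of $\widehat{B}$ over $Z(\widehat{B})$. The key point is that $Z(\widehat{B})=Z(B_q(f))X^{-1}$. Indeed, $Z(B_q(f))=Z(\widehat{B})\cap B_q(f)$, and any central element of $\widehat{B}$ becomes an element of $B_q(f)$ after multiplication by a suitable element of $X$. So $\rk_Z B_q(f)=n^2$. Because $B_q(f)$ is prime PI and $Q(B_q(f))$ is a division algebra of dimension $n^2$ over its center, its PI degree is $\sqrt{n^2}=n$.

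The consequences are immediate. Theorem~\ref{thm:main} gives $|\Oz(B_q(f))|=|\mu_e\times\mu_e|=e^2$, and then $\rk_Z/|\Oz|=n^2/e^2=m^2$. The step needing the most care is the identification $Z(\widehat{B})=Z(B_q(f))X^{-1}$ together with the claim that the rank is unchanged under this central localization. I would handle both by working over the fraction fields throughout, where the rank is simply a vector-space dimension.
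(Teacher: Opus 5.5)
Your proposal is correct and is essentially the paper's argument: localize at $X$, absorb $w'$ into $\Omega=uvw'$ to get $Z(\widehat{B})=\kk[u^{\pm n},v^{\pm n},\Omega]$ with free basis $\{u^av^b\}_{0\le a,b<n}$, and transfer the rank back to $B_q(f)$. Your explicit check that $Z(\widehat{B})=Z(B_q(f))X^{-1}$ spells out a step the paper leaves implicit. Drop the stray claims that $\{u^av^bw'^c\}_{0\le a,b,c<n}$ is a basis and that the center is a Laurent ring in three variables ($\Omega$ is not inverted); only the basis $\{u^av^b\}$ you settle on is right.
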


\begin{proof}
By Section \ref{sec:prelim}, $\widehat{B}$ is a central localization of $B_q(f)$, so the fraction field of the center is the same on both sides, and so is the rank. It therefore suffices to work in $\widehat{B}$.

The relations $w'u=quw'$ and $w'v=q^{-1}vw'$ give $w'uv=uvw'$, so
\begin{align*}
  \Omega^c &= (uv)^cw'^{\,c} \qquad (c\geq0).
\end{align*}
Now $\{w'^{\,c}\}_{c\geq0}$ is a basis of $\widehat{B}$ over $\kk_q[u^{\pm1},v^{\pm1}]$
and $uv$ is a unit, so $\{\Omega^c\}_{c\geq0}$ is one as well. Since $\Omega$ is central,
\begin{align*}
  \widehat{B} &= \kk_q[u^{\pm1},v^{\pm1}][\Omega],
\end{align*}
a polynomial ring in a central variable over the quantum torus in two variables. That torus is $\mathbb{Z}^2$-graded with one-dimensional components, so its center is spanned by the central monomials, and $u^av^b$ commutes with $u$ and $v$ exactly when $q^a=q^b=1$. Therefore
\begin{align*}
  Z(\widehat{B}) &= \kk[u^{\pm n},v^{\pm n},\Omega],
  & \widehat{B} &= \bigoplus_{0\leq a,b\leq n-1}Z(\widehat{B})\,u^av^b,
\end{align*}
which gives $\rk_{Z(B_q(f))}B_q(f)=n^2$. The PI degree is the square root of the rank \cite[(E3.6.1)]{CGWZ2}. The rest follows from Theorem \ref{thm:main} and $e\mid n$.
\end{proof}

\begin{remark}
The computation above never uses AS-regularity, so it covers every member of the family. For the AS-regular members \cite[Theorem E]{CGWZ2} says that $|\Oz(A)|$ divides $\rk_Z(A)$; here the divisibility holds throughout the family, and the quotient is computed exactly, namely $m^2$. Since $e<n$, the divisibility is always strict.
\end{remark}

\begin{remark}
No $B_q(f)$ satisfying the standing hypothesis is isomorphic to a skew polynomial ring. Suppose $B_q(f)\cong S_{\mathbf p}$. The order of the ozone group and the rank over the center are invariants of the algebra, so it is enough to compare them on the two sides. For a PI skew polynomial ring the center is the fixed ring $S_{\mathbf p}^{\mathcal O}$ under the group $\mathcal O=\Oz(S_{\mathbf p})$ generated by the conjugations $\eta_{x_i}$ \cite[Hypothesis 0.3 and \S1]{CGWZ1}, and $S_{\mathbf p}$ is noetherian PI AS-regular, so \cite[Proposition 3.9(2)]{CGWZ2} gives $|\Oz(S_{\mathbf p})|=\rk_Z(S_{\mathbf p})$. On the other side $|\Oz(B_q(f))|=e^2<n^2=\rk_Z(B_q(f))$ by Theorem \ref{thm:main} and
Proposition \ref{prop:rank}.

We restrict application of the skew polynomial machinery deliberately to $S_{\mathbf p}$. The characterization \cite[Theorem F]{CGWZ2} requires the algebra to be AS-regular and generated in degree one, conditions which are not satisfied by $B_q(f)$ for general $f$; \cite{CGWZ2} remark that there is no known intrinsic characterization of $S_{\mathbf p}$ as an ungraded algebra. 
\end{remark}

\begin{remark}
Corollary \ref{cor:normal} is about classes, not elements. Normal elements outside the submonoid generated by $u^m$, $v^m$ and the center do exist: for $n=6$ and $f=t^2$ one has $q+q^{-2}=0$ and $q^{-1}+q^2=0$, so $w^2u=q^2uw^2$ and $w^2v=q^{-2}vw^2$, and $w^2$ is normal. Suppose $w^2=z\,u^{2a}v^{2b}$ with $z$ central. Leading forms of central elements are central in $\gr B_q(t^2)$, because $\lf$ is multiplicative and $\gr B_q(t^2)$ is a domain, so $\lf(z)$ commutes with $U$, $V$ and $W$. Comparing leading forms in the filtration of Lemma \ref{lem:filt}, where $s=2$, forces $a=b=0$ and $\lf(z)=W^2$; but
$W^2U=q^2UW^2$ with $q^2\neq1$. Its class is $\eta_{u^m}^{-1}\eta_{v^m}^{-1}$, so the ozone group sees nothing new.
\end{remark}

\begin{question}
Lemma \ref{lem:deg0} places $Z(B_q(f))$ in degree $(0,0)$ but does not describe it, and \cite[Question 5.3]{GY} remains open: which elements of $Z(\widehat{B})$ actually lie in $B_q(f)$? The delicate element is $w'^{\,n}$. The mechanism is visible in \cite[Remark 3.7]{GY}, where $w^4$ fails to be central although $w^4+2(1-q)\Omega w$ is.
\end{question}

\subsection*{Declaration of AI-assisted technologies}
During the preparation of this manuscript, the authors used Claude (Anthropic) to improve the English language and readability of the exposition and to assist with the preparation and refinement of the LaTeX source. 

\end{document}